\newtheorem{theorem}{Theorem}[section]
\newtheorem{lemma}[theorem]{Lemma}
\newtheorem{proposition}[theorem]{Proposition}
\newtheorem{definition}[theorem]{Definition}
\newtheorem{corollary}[theorem]{Corollary}
\newtheorem{remark}[theorem]{Remark}
\numberwithin{equation}{section}
\newcommand{\hz}{\widehat{0}}
\newcommand{\timesdots}{\times \cdots \times}
\DeclareMathOperator{\inv}{inv}
\DeclareMathOperator{\frst}{frst}
\DeclareMathOperator{\er}{er}
\DeclareMathOperator{\asc}{asc}
\DeclareMathOperator{\odd}{odd}
\newcommand{\ascodd}{\asc_{\odd}}
\newcommand{\doubleprime}{\prime\prime}
\newcommand{\SSSS}{\mathfrak{S}}
\newcommand{\Gaussian}[2]{\genfrac{[}{]}{0pt}{}{#1}{#2}_q}
\begin{document}

\title{The Gaussian coefficient revisited}

\author{\sc Richard EHRENBORG\thanks{Corresponding author:
Department of Mathematics,
University of Kentucky,
Lexington, KY 40506-0027,
USA,
{\tt richard.ehrenborg@uky.edu}.}
\:\: and 
\:\:
Margaret A.\ READDY\thanks{Department of Mathematics,
University of Kentucky,
Lexington, KY 40506-0027,
USA,
{\tt margaret.readdy@uky.edu}.}
}

\date{}

\maketitle

\begin{abstract}
We give a new $q$-$(1+q)$-analogue of the
Gaussian coefficient, also known as the
$q$-binomial which, like the original $q$-binomial
$\Gaussian{n}{k}$,
is symmetric in $k$ and $n-k$.
We show this $q$-$(1+q)$-binomial is more compact than
the one discovered by Fu, Reiner, Stanton and Thiem.
Underlying our $q$-$(1+q)$-analogue
is a Boolean algebra decomposition
of an associated poset.
These ideas are extended to the Birkhoff transform of any finite poset.
We end with a discussion of higher analogues of the
$q$-binomial.

\vspace*{2 mm}

\noindent
{\em 2010 Mathematics Subject Classification.}
Primary
06A07; 
Secondary 
05A05, 
05A10, 
05A30. 

\vspace*{2 mm}

\noindent
{\em Key words and phrases.}
$q$-analogue, Birkhoff transform, distributive lattice,
poset decomposition.
\end{abstract}

\section{Introduction}

Inspired by work of Fu, Reiner, Stanton 
and Thiem~\cite{Fu_Reiner_Stanton_Thiem},
Cai and Readdy~\cite{Cai_Readdy}
asked the following question.
Given a combinatorial $q$-analogue
$$     X(q)   =   \sum_{w \in X} q^{a(w)}   ,   $$
where $X$ is a set of objects and
$a(\cdot)$ is a  statistic defined on
the elements of $X$,
when can one find a smaller set $Y$ and two statistics
$s$ and $t$ such that
$$     X(q)   =   \sum_{w \in Y} q^{s(w)} \cdot (1+q)^{t(w)}  .   $$
Such an interpretation is called an {\em $q$-$(1+q)$-analogue}.
Examples of $q$-$(1+q)$-analogues have been determined for
the $q$-binomial by Fu, Reiner, 
Stanton and Thiem~\cite{Fu_Reiner_Stanton_Thiem},
and for the $q$-Stirling numbers of the first and second kinds
by Cai and Readdy~\cite{Cai_Readdy},
who also gave poset and homotopy interpretations of their
$q$-$(1+q)$-analogues.

In 1916
MacMahon~\cite{MacMahon_1,MacMahon_collected,MacMahon_book}
observed that the Gaussian coefficient, also known as
the $q$-binomial coefficient, is given by
$$ \Gaussian{n}{k}
=
  \sum_{w \in \Omega_{n,k}}
        q^{\inv(w)}   .  $$
Here $\Omega_{n,k} = \SSSS(0^{n-k},1^{k})$ denotes all
permutations of the multiset $\{0^{n-k},1^{k}\}$,
that is, all words $w = w_1 \cdots w_n$
of length~$n$ with $n-k$ zeroes and
$k$ ones,
and
$\inv( \cdot )$ denotes the inversion statistic
defined by
$\inv(w_{1} w_{2} \cdots w_{n})
= 
|\{(i,j) \: : \: 1 \leq i < j \leq n, w_{i} > w_{j}\}|$.
Fu et al.\ defined a subset
$\Omega^{\prime}_{n,k} \subseteq \Omega_{n,k}$
and two statistics $a$ and $b$ such that
$$ \Gaussian{n}{k}
=
  \sum_{w \in \Omega^{\prime}_{n,k}}
        q^{a(w)} \cdot (1+q)^{b(w)}   .  $$

In this paper we will return to the original study
by Fu et al.\ of the Gaussian coefficient.
We discover a more compact
$q$-$(1+q)$-analogue which, like the original
Gaussian coefficient, is also symmetric
in the variables $k$ and $n-k$.
See Corollary~\ref{corollary_symmetry}
and Theorem~\ref{theorem_compact}.
This symmetry was missing in Fu et al.'s original 
$q$-$(1+q)$-analogue.
We give a Boolean algebra decomposition of the
related poset $\Omega_{n,k}$.
Since this poset is a distributive lattice, 
in the last section we extend these
ideas 
to poset decompositions of any distributive lattice
and
other analogues.

\section{A poset interpretation}

In this section we consider the poset structure on
$0$-$1$-words in $\Omega_{n,k}$.
For further poset terminology and background,
we refer the reader to~\cite{EC1}.

We begin by making
the set of elements $\Omega_{n,k}$
into a graded poset
 by defining the
cover relation to be
$$  u \circ 01 \circ v \prec u \circ 10 \circ v  , $$
where $\circ$ denotes concatenation of words.
The word $0^{n-k} 1^{k}$ is the minimal element
and
the word $1^{k} 0^{n-k}$ is the maximal element
in the poset $\Omega_{n,k}$.
Furthermore, this poset is graded by
the inversion statistic.
This poset is simply the interval $[\hz, x]$
of Young's lattice, where the minimal element
$\hz$ is the empty Ferrers diagram
and
$x$ is the Ferrers diagram consisting of $n-k$ columns and
$k$ rows.

An alternative description of the 
poset $\Omega_{n,k}$
is that it is isomorphic to the Birkhoff transform of the Cartesian
product of two chains.
Let $C_{m}$ denote the $m$-element chain.
The poset $\Omega_{n,k}$ is isomorphic to
the distributive lattice of all lower order ideals of
the product $C_{n-k} \times C_{k}$, usually 
denoted by $J(C_{n-k} \times C_{k})$.

\begin{definition}
Let $\Omega^{\doubleprime}_{n,k}$
consist of all $0$,$1$-words
$v = v_{1} v_{2} \cdots v_{n}$
in
$\Omega_{n,k}$
such that
$$
v_{1} \leq v_{2}, \:\:
v_{3} \leq v_{4}, \:\:
\ldots, \:\:
v_{2 \cdot \lfloor n/2 \rfloor - 1} \leq v_{2 \cdot \lfloor n/2 \rfloor} .
$$
\end{definition}
Observe that when $n$ is odd there is no condition
on the last entry $w_{n}$.
Define two maps $\phi$ and~$\psi$
on $\Omega_{n,k}$
by sending the word
$w = w_{1} w_{2} \cdots w_{n}$
to
\begin{align*}
\phi(w)
&= 
\min(w_{1},w_{2}), \max(w_{1},w_{2}), \:\:
\min(w_{3},w_{4}), \max(w_{3},w_{4}), \:\: \ldots  , \\
\psi(w)
&= 
\max(w_{1},w_{2}), \min(w_{1},w_{2}), \:\:
\max(w_{3},w_{4}), \min(w_{3},w_{4}), \:\: \ldots  .
\end{align*}
The map $\phi$ sorts the entries in positions $1$
and $2$, $3$ and $4$, and so on. If $n$ is odd, the entry~$w_{n}$
remains in the same position.
Similarly, the map $\psi$ sorts in reverse order each pair of positions.
Note that the map $\phi$ maps 
$\Omega_{n,k}$ surjectively onto the set
$\Omega^{\doubleprime}_{n,k}$.

We have the following  Boolean algebra decomposition
of the poset $\Omega_{n,k}$.

\begin{theorem}
The distributive lattice
$\Omega_{n,k}$ has the Boolean algebra decomposition
$$ \Omega_{n,k}
=
  \bigcup_{v \in \Omega^{\doubleprime}_{n,k}}
        [v,\psi(v)] .
$$
\label{theorem_decomposition}
\end{theorem}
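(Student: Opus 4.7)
The plan has two main steps: first observe that the fibers of $\phi$ partition $\Omega_{n,k}$, and then show that each fiber $\phi^{-1}(v)$ coincides with the interval $[v,\psi(v)]$ and is a Boolean algebra. The first step is immediate from the already-noted surjectivity of $\phi$ onto $\Omega^{\doubleprime}_{n,k}$, since $\phi$ is a function.

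For the second step I would work with the partial-sum description of the order. Set $\sigma_i(w) = |\{j \leq i : w_j = 1\}|$. Each cover move $01 \to 10$ increases exactly one $\sigma_i$ by $1$ and leaves the others fixed, so $u \leq w$ in $\Omega_{n,k}$ if and only if $\sigma_i(u) \leq \sigma_i(w)$ for every $i$; this is the standard partial-sum model for Young's lattice. For any $v$, reversing a pair $(v_{2m-1},v_{2m})$ does not affect the cumulative count of $1$'s at any position past that pair, so $\sigma_{2m}(v) = \sigma_{2m}(\psi(v))$ for all $m$. Together with $v_{2m-1} \leq v_{2m} = \psi(v)_{2m-1}$ for $v \in \Omega^{\doubleprime}_{n,k}$, this also yields $\sigma_{2m-1}(v) \leq \sigma_{2m-1}(\psi(v))$, hence $v \leq \psi(v)$.

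An element $w$ then lies in $[v,\psi(v)]$ exactly when $\sigma_i(v) \leq \sigma_i(w) \leq \sigma_i(\psi(v))$ for every $i$. Squeezing at even indices forces $\sigma_{2m}(w) = \sigma_{2m}(v)$, i.e.\ the pair $(w_{2m-1},w_{2m})$ carries the same number of $1$'s as $(v_{2m-1},v_{2m})$; the odd-index inequalities then allow $w$'s pair to be either $v$'s pair or its reversal, the two coinciding precisely when $v$'s pair is $(0,0)$ or $(1,1)$. Hence $[v,\psi(v)]$ is the set of $w$ obtained by independently choosing, for each $(0,1)$-pair of $v$, whether to leave it or reverse it, which is manifestly a Boolean algebra of rank equal to the number of $(0,1)$-pairs of $v$ and equally manifestly equals $\phi^{-1}(v)$. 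The only real obstacle is the identification $\phi^{-1}(v)=[v,\psi(v)]$; the partial-sum criterion reduces it to a pair-by-pair check.
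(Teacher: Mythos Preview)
Your proposal is correct and follows essentially the same approach as the paper: partition $\Omega_{n,k}$ into the fibers of $\phi$ and identify each fiber $\phi^{-1}(v)$ with the Boolean interval $[v,\psi(v)]$. The paper's proof simply asserts $\phi(w)\leq w\leq\psi(w)$ and $\phi^{-1}(v)\cong[v,\psi(v)]$ without further justification, whereas you supply the verification via the partial-sum order criterion; this is added detail rather than a different route.
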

\begin{proof}
Observe that the maps
$\phi$ and $\psi$ 
satisfy the inequalities $\phi(w) \leq w \leq \psi(w)$.
Furthermore, the fiber of the map
$\phi : \Omega_{n,k} \longrightarrow \Omega^{\doubleprime}_{n,k}$
is isomorphic to a Boolean algebra,
that is,
$\phi^{-1}(v) \cong [v,\psi(v)]$.
\end{proof}

For $v \in \Omega^{\doubleprime}_{n,k}$
define the statistic
$$  \ascodd(v)
   =
      |\{ i \: : \: v_{i} < v_{i+1}, i \text{ odd}\}|  ,  $$
that is, $\ascodd(\cdot)$ enumerates
the number of ascents in odd positions.

\begin{corollary}
The $q$-binomial is given by
\begin{equation}
\label{equation_smaller_q_binomial}
 \Gaussian{n}{k}
=
  \sum_{v \in \Omega^{\doubleprime}_{n,k}}
        q^{\inv(v)} \cdot (1+q)^{\ascodd(v)}   .  
\end{equation}
\label{corollary_q_1_q}
\end{corollary}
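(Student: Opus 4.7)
The plan is to derive the identity directly from MacMahon's formula using the Boolean algebra decomposition of Theorem~\ref{theorem_decomposition}. Starting from
$$\Gaussian{n}{k} = \sum_{w \in \Omega_{n,k}} q^{\inv(w)},$$
I would partition the sum over $w$ according to its fiber $\phi^{-1}(v)$, indexed by $v \in \Omega^{\doubleprime}_{n,k}$, writing
$$\Gaussian{n}{k} = \sum_{v \in \Omega^{\doubleprime}_{n,k}} \sum_{w \in [v,\psi(v)]} q^{\inv(w)}.$$
The task is then to evaluate the inner sum and show it equals $q^{\inv(v)} (1+q)^{\ascodd(v)}$.

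For the inner sum, I would first identify the Boolean algebra $[v,\psi(v)]$ combinatorially. Since $v \in \Omega^{\doubleprime}_{n,k}$ satisfies $v_{2i-1} \leq v_{2i}$, the map $\psi$ swaps a pair $(v_{2i-1},v_{2i})$ only when $v_{2i-1}=0$ and $v_{2i}=1$, that is, exactly at the positions counted by $\ascodd(v)$. Since the pairs $(2i-1,2i)$ are disjoint, the elements of $[v,\psi(v)]$ are indexed by subsets of these ascent-odd positions, giving a Boolean algebra of rank $\ascodd(v)$.

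Next I would show that swapping the entries of a single ascent-odd pair $(2i-1,2i)$ increases $\inv$ by exactly one. The key observation is that for any position $k \notin \{2i-1,2i\}$, the total contribution to $\inv$ from the pairs $(k,2i-1)$ and $(k,2i)$ depends only on the multiset $\{v_{2i-1},v_{2i}\}$ at positions $2i-1$ and $2i$, not on the assignment; hence this contribution is invariant under the swap. Only the inversion internal to the pair changes, contributing $0$ before the swap (the $01$ pattern) and $1$ after (the $10$ pattern). Consequently, summing over the $2^{\ascodd(v)}$ choices in the Boolean interval factors as
$$\sum_{w \in [v,\psi(v)]} q^{\inv(w)} = q^{\inv(v)} (1+q)^{\ascodd(v)},$$
and substituting back yields \eqref{equation_smaller_q_binomial}.

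The main obstacle is the invariance claim in the preceding step: one must verify carefully that crossing inversions between the swapped pair and any outside position cancel in total. Once that is established, the decomposition does all the work, and the statement follows by straightforward assembly.
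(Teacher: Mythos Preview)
Your proposal is correct and follows exactly the paper's approach: partition MacMahon's sum according to the fibers $\phi^{-1}(v)=[v,\psi(v)]$ from Theorem~\ref{theorem_decomposition} and evaluate the inner sum as $q^{\inv(v)}(1+q)^{\ascodd(v)}$. The paper's proof simply asserts this inner-sum evaluation in one sentence; your write-up supplies the details (identifying the Boolean interval with subsets of ascent-odd pairs and checking that each swap changes $\inv$ by exactly one via the multiset invariance of crossing inversions), which is a welcome elaboration of the same argument.
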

\begin{proof}
It is enough to observe that
the sum of the inversion
statistic over the elements in the fiber $\phi^{-1}(v) = [v,\psi(v)]$
for $v \in \Omega^{\doubleprime}_{n,k}$
is given by
$q^{\inv(v)} \cdot (1+q)^{\ascodd(v)}$.
\end{proof}

A geometric way to understand this $q$-$(1+q)$-interpretation
is to consider lattice paths from the origin $(0,0)$
to $(n-k,k)$ which only use east steps $(1,0)$ and
north steps $(0,1)$. Color the squares of this $(n-k) \times k$
board as a chessboard, where the square incident to
the origin is colored white.
The map $\phi$ in the proof
of Theorem~\ref{theorem_decomposition}
corresponds to taking
a lattice path where every time
there is a north step followed by an
east step that 
turns around a white square, we exchange these two steps.
The statistic $\ascodd$ enumerates
the number of times an east step is followed by a north step
when this pair of steps 
borders a white square.

Let $\er(n,k)$ denote the cardinality of the set
$\Omega^{\doubleprime}_{n,k}$.
Then we have
\begin{proposition}
The cardinalities $\er(n,k)$ satisfy
the recursion
\begin{align*}
\er(n,k) & = \er(n-2,k-2) + \er(n-2,k-1) + \er(n-2,k) 
\:\:\:\:
\text{ for } 0 \leq k \leq n \text{ and } n \geq 2,
\end{align*}
with the boundary conditions $\er(0,0) = \er(1,0) = \er(1,1) = 1$
and
$\er(n,k) = 0$ whenever $k > n$, $k<0$ or $n < 0$.
\end{proposition}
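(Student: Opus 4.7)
The plan is to condition on the first pair of entries $(v_{1},v_{2})$ of a word $v \in \Omega^{\doubleprime}_{n,k}$. Since the defining condition of $\Omega^{\doubleprime}_{n,k}$ forces $v_{1} \leq v_{2}$, the only possibilities are
\[
(v_{1},v_{2}) \in \{(0,0),\, (0,1),\, (1,1)\},
\]
and crucially $(1,0)$ is forbidden. The remaining suffix $v_{3} v_{4} \cdots v_{n}$ is a word of length $n-2$ whose pairwise inequality conditions $v_{3} \leq v_{4}, v_{5} \leq v_{6}, \ldots$ are exactly the defining conditions for membership in $\Omega^{\doubleprime}_{n-2, k'}$, where $k'$ records the number of ones remaining.

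Each of the three choices of $(v_{1},v_{2})$ removes $0$, $1$, or $2$ ones respectively, so the suffix lies in $\Omega^{\doubleprime}_{n-2,k}$, $\Omega^{\doubleprime}_{n-2,k-1}$, or $\Omega^{\doubleprime}_{n-2,k-2}$. This decomposition is clearly a bijection between $\Omega^{\doubleprime}_{n,k}$ and the disjoint union of these three sets, which gives
\[
\er(n,k) = \er(n-2,k) + \er(n-2,k-1) + \er(n-2,k-2).
\]

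The boundary cases are verified directly: the empty word shows $\er(0,0) = 1$, the single-letter words $0$ and $1$ give $\er(1,0) = \er(1,1) = 1$, and no word with $n-k$ zeroes and $k$ ones exists when $k > n$, $k < 0$, or $n < 0$, so $\er(n,k) = 0$ in those ranges. One should also check that the recursion remains valid at the edges $k \in \{0,1,n-1,n\}$: when $k-1$ or $k-2$ becomes negative (or $k$ exceeds $n-2$), the corresponding terms vanish by the boundary convention, and the count matches. No step is really an obstacle here; the only care required is to confirm that forbidding $(1,0)$ as a first pair is precisely what makes the second summand $\er(n-2,k-1)$ appear with coefficient one rather than two.
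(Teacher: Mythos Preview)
Your proof is correct and follows exactly the same approach as the paper: condition on the first pair $v_{1}v_{2}$, which by the constraint $v_{1}\le v_{2}$ must be $00$, $01$, or $11$, and observe that the suffix is a word in $\Omega^{\doubleprime}_{n-2,k}$, $\Omega^{\doubleprime}_{n-2,k-1}$, or $\Omega^{\doubleprime}_{n-2,k-2}$ respectively. The paper states this in a single sentence; your version simply fills in the details.
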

\begin{proof}
A word in
$\Omega^{\doubleprime}_{n,k}$
begins with either $00$, $01$ or $11$,
yielding the three cases of the recursion.
\end{proof}

Directly we obtain the generating polynomial.
\begin{theorem}
The generating polynomial for $\er(n,k)$
is given by
$$   \sum_{k=0}^{n} \er(n,k) \cdot x^{k}
   =
     (1+x+x^{2})^{\lfloor n/2 \rfloor}
   \cdot
     (1+x)^{n - 2 \cdot \lfloor n/2 \rfloor}  .  $$
\end{theorem}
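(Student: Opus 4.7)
The plan is to translate the three-term recursion of the preceding proposition into a one-step recursion for the generating polynomials $F_n(x) := \sum_{k=0}^n \er(n,k) \cdot x^k$. Multiplying the identity $\er(n,k) = \er(n-2,k-2) + \er(n-2,k-1) + \er(n-2,k)$ by $x^k$ and summing over all integers $k$, the shifts $k \mapsto k-2$ and $k \mapsto k-1$ translate into multiplications by $x^2$ and $x$ respectively, producing
$$F_n(x) = (1 + x + x^2) \cdot F_{n-2}(x) \quad \text{for } n \geq 2.$$
The boundary conditions $\er(0,0) = 1$ and $\er(1,0) = \er(1,1) = 1$ supply the base cases $F_0(x) = 1$ and $F_1(x) = 1 + x$.

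Iterating the two-step recursion $\lfloor n/2 \rfloor$ times collapses $F_n(x)$ to $(1+x+x^2)^{\lfloor n/2 \rfloor} \cdot F_{n - 2\lfloor n/2 \rfloor}(x)$. Since $n - 2\lfloor n/2 \rfloor \in \{0, 1\}$ selects one of the two base cases, the residual factor is exactly $(1+x)^{n - 2\lfloor n/2 \rfloor}$ (equal to $1$ when $n$ is even and to $1+x$ when $n$ is odd), yielding the stated formula.

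An equivalent combinatorial route, which I would mention as a remark, bypasses the recursion entirely: the defining inequalities for $\Omega^{\doubleprime}_{n,k}$ couple only the paired positions, each pair $v_{2i-1} \leq v_{2i}$ independently taking one of the values $00, 01, 11$ with one-counts $0, 1, 2$, while an odd-length word carries an unconstrained final entry contributing $1+x$. The product structure over positions immediately gives the product formula on generating polynomials. There is essentially no obstacle here; the only small point requiring care is ensuring that the index shifts in the recursion are correctly captured when summed over all $k$, which is guaranteed by the convention $\er(n,k) = 0$ outside $0 \leq k \leq n$.
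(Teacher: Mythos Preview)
Your proof is correct and matches the paper's approach: the paper simply states ``Directly we obtain the generating polynomial'' after the three-term recursion, and your argument spells out exactly that direct derivation (with your combinatorial remark being an equally valid reading of ``directly'' from the definition of $\Omega^{\doubleprime}_{n,k}$).
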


We end with a statement concerning the symmetry of
the $q$-$(1+q)$-binomial.

\begin{corollary}
The set of defining elements for the $q$-$(1+q)$-binomial
satisfy the following symmetric relation:
$$
     |\Omega_{n,k}''| = |\Omega_{n,n-k}''| .
$$
\label{corollary_symmetry}
\end{corollary}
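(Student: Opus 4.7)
The plan is to construct an explicit bijection, indeed an involution, $\iota \colon \Omega''_{n,k} \longrightarrow \Omega''_{n,n-k}$. The naive attempt of bit-complementation $v_i \mapsto 1 - v_i$ fails because it reverses the defining inequality $v_{2i-1} \le v_{2i}$, so the idea is to complement and simultaneously swap within each pair of positions.

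Concretely, for $v = v_1 v_2 \cdots v_n \in \Omega''_{n,k}$ I would set
\[
   \iota(v)_{2i-1} \;=\; 1 - v_{2i}, \qquad \iota(v)_{2i} \;=\; 1 - v_{2i-1},
\]
for $i = 1, \ldots, \lfloor n/2 \rfloor$, and when $n$ is odd additionally set $\iota(v)_{n} = 1 - v_n$.

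Three short verifications then remain, all essentially mechanical. First, the paired inequality $\iota(v)_{2i-1} \le \iota(v)_{2i}$ is equivalent to $v_{2i-1} \le v_{2i}$, so $\iota(v)$ indeed lies in $\Omega''_{n,\ast}$ for some second parameter. Second, each pair originally contributing $v_{2i-1}+v_{2i}$ ones now contributes $2-v_{2i-1}-v_{2i}$ ones; together with the flipped singleton in the odd case, the total one-count becomes exactly $n-k$, placing $\iota(v)$ in $\Omega''_{n,n-k}$. Third, applying $\iota$ twice restores $v$, so $\iota$ is an involution and in particular a bijection. There is no real obstacle here; the only mild subtlety is remembering to flip the unpaired final entry separately when $n$ is odd, so that the one-count works out.

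An alternative, non-bijective, route would simply invoke the generating polynomial theorem stated immediately above: both $1+x+x^2$ and $1+x$ are palindromic, hence so is their product $(1+x+x^2)^{\lfloor n/2 \rfloor}(1+x)^{n - 2 \lfloor n/2 \rfloor}$, and the equality $\er(n,k) = \er(n,n-k)$ follows at once. I would nonetheless prefer the bijective argument as the main proof, since it is more combinatorially transparent and lends itself to extensions of the decomposition to other distributive lattices.
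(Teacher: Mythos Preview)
Your argument is correct. The involution $\iota$ you define---complement each bit and swap within each consecutive pair, flipping the leftover entry when $n$ is odd---does exactly what you claim: the inequality $v_{2i-1}\le v_{2i}$ is preserved because $1-v_{2i}\le 1-v_{2i-1}$ is equivalent to it, the one-count becomes $n-k$ since $\iota$ is a position-permutation followed by global bit-complementation, and $\iota^2=\mathrm{id}$ is immediate.

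The paper, however, takes precisely the route you list as your \emph{alternative}: it simply notes that $(1+x+x^{2})^{\lfloor n/2\rfloor}(1+x)^{n-2\lfloor n/2\rfloor}$ is a product of palindromic polynomials, hence palindromic, and stops there. So your primary proof is a genuinely different, bijective, argument. What the paper's approach buys is brevity and the fact that it falls out of a theorem already in hand; what yours buys is an explicit matching at the level of words, which in particular shows that the symmetry respects the pair structure (each pair $00,01,11$ maps to $11,01,00$). Your closing remark about extensions to general distributive lattices is optimistic---the map relies on the very specific pair structure of $\Omega''_{n,k}$ and has no obvious analogue for an arbitrary antichain $A$ in the $J(P)$ setting of the paper's final section---but that does not affect the correctness of the proof itself.
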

\begin{proof}
This follows from the fact that
the generating polynomial for
$\er(n,k)$ is a product of palindromic polynomials,
and thus is itself is a palindromic polynomial. 
\end{proof}

\section{Analysis of the Fu--Reiner--Stanton--Thiem interpretation}

A {\em weak partition} is a finite non-decreasing sequence
of non-negative integers. 
A weak partition~$\lambda = (\lambda_{1}, \ldots, \lambda_{n-k})$
with $n-k$ parts and each part at most $k$
where $\lambda_1 \leq \cdots \leq \lambda_{n-k}$
corresponds to a Ferrers diagram lying inside
an
$(n-k) \times k$ rectangle
with column $i$ having height~$\lambda_i$.
These weak partitions are in direct correspondence
with the set $\Omega_{n,k}$.

Fu, Reiner, Stanton and Thiem used 
a pairing algorithm to determine
a subset $\Omega_{n,k}' \subseteq \Omega_{n,k}$
of $0$-$1$-sequences
to define their $q$-$(1+q)$-analogue of the $q$-binomial; 
see~\cite[Proposition~6.1]{Fu_Reiner_Stanton_Thiem}.
This translates into
the following statement.
The set $\Omega_{n,k}'$
is in bijection with 
weak partitions into $n-k$ parts
with each part at most $k$
such that
\begin{itemize}
\item[(a)] if $k$ is even, each odd part has even multiplicity,
\item[(b)] if $k$ is odd, each even part (including $0$)
has even multiplicity.
\end{itemize}

\begin{definition}
Let $\frst(n,k)$ be the cardinality of
the set $\Omega_{n,k}'$.
\end{definition}

\begin{lemma}
The quantity $\frst(n,k)$ counts the number of
weak partitions into $n-k$ parts
where each part is at most $k$
and each odd part has even multiplicity.
\label{lemma_complement}
\end{lemma}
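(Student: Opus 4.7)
The plan is to unify the two cases in the definition of $\Omega_{n,k}'$ via complementation within the $(n-k) \times k$ rectangle. When $k$ is even, condition (a) of the definition is already the desired statement, so there is nothing to do in that case.

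When $k$ is odd, I would exhibit a bijection between weak partitions into $n-k$ parts, each at most $k$, satisfying condition (b)---every even part (including $0$) has even multiplicity---and weak partitions in the same rectangle in which every odd part has even multiplicity. The natural candidate is the complement map $\lambda \mapsto \lambda^c$ inside the $(n-k) \times k$ rectangle, defined for $\lambda = (\lambda_{1} \leq \cdots \leq \lambda_{n-k})$ by
$$
\lambda^{c} = (k - \lambda_{n-k}, \: k - \lambda_{n-k-1}, \: \ldots, \: k - \lambda_{1}).
$$
Geometrically this is the complement of the Ferrers diagram of $\lambda$ inside the rectangle, rotated by $180$ degrees; it is a standard involution on the set of weak partitions fitting inside the $(n-k) \times k$ rectangle.

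The key observation is that for each $j \in \{0, 1, \ldots, k\}$ the multiplicity of $j$ in $\lambda^{c}$ equals the multiplicity of $k - j$ in $\lambda$. Because $k$ is odd, the map $j \mapsto k - j$ interchanges even and odd values in $\{0, \ldots, k\}$. Hence the multiplicity profile of $\lambda$ on even values is carried to the multiplicity profile of $\lambda^{c}$ on odd values, and condition (b) on $\lambda$ is equivalent to the unified condition on $\lambda^{c}$. Combined with the even case, this yields the lemma.

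I do not expect a substantive obstacle here; the argument is bookkeeping. The only point that warrants a bit of care is making sure $0$ is counted as an even part in the accounting, and noting that for $k$ odd the value $0$ corresponds under complementation to the odd value $k$, so that the multiplicity of $0$ in $\lambda$ is correctly matched with the multiplicity of $k$ in $\lambda^{c}$.
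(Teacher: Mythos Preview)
Your argument is correct and is exactly the approach the paper takes: the even case is immediate, and for odd $k$ the complement involution in the $(n-k)\times k$ rectangle swaps the parity of part sizes via $j\mapsto k-j$. Your write-up simply spells out the bookkeeping that the paper leaves implicit.
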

\begin{proof}
When $k$ is even there is nothing to prove.
When $k$ is odd,
by considering the complement of weak partitions
with respect to the rectangle of size $(n-k) \times k$,
we obtain a bijective proof.
The same complement proof also shows the case when
$k$ is even holds.
\end{proof}

\begin{theorem}
The $\frst$-coefficients satisfy the recursion
\begin{align*}
\frst(n,k) & = \frst(n-1,k-1) + \frst(n-1,k)
&& \text{ for $k$ even,} \\
\frst(n,k) & = \frst(n-2,k-2) + \frst(n-2,k-1) + \frst(n-2,k)
&& \text{ for $k$ odd,}
\end{align*}
where $0 \leq k \leq n$ 
and $n \geq 2$
with the boundary conditions
$\frst(0,0) = \frst(1,0) = \frst(1,1) = 1$
and
$\frst(n,k) = 0$ whenever $k > n$, $k<0$ or $n < 0$.
\end{theorem}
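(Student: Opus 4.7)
The plan is to invoke Lemma~\ref{lemma_complement}, so that $\frst(n,k)$ counts weak partitions $\lambda = (\lambda_1 \leq \cdots \leq \lambda_{n-k})$ with parts in $\{0, 1, \ldots, k\}$ and every odd part appearing with even multiplicity. Both recursions then follow by stratifying this set according to the value of the largest part $\lambda_{n-k}$ and removing the appropriate number of copies.

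For $k$ even, I would split according to whether $\lambda_{n-k} < k$ or $\lambda_{n-k} = k$. The first case places all parts in $\{0, \ldots, k-1\}$, yielding $\frst(n-1, k-1)$ after rewriting $n-k = (n-1)-(k-1)$; the odd-multiplicity condition is identical since no odd part above $k-1$ is available anyway. In the second case, removing one copy of $k$ leaves a weak partition with $n-k-1 = (n-1)-k$ parts bounded by $k$; since $k$ is even this deletion does not disturb the odd-multiplicity condition, and the resulting partitions are counted by $\frst(n-1, k)$.

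For $k$ odd, I would stratify into three cases based on whether $\lambda_{n-k}$ is at most $k-2$, equal to $k-1$, or equal to $k$. In the first case all parts lie in $\{0, \ldots, k-2\}$, contributing $\frst(n-2, k-2)$ via $n-k = (n-2)-(k-2)$. In the second, $k-1$ is even so removing one copy preserves the parity condition and gives $\frst(n-2, k-1)$. In the third, $k$ is odd so the parity condition forces $k$ to appear with multiplicity at least $2$; removing two copies preserves the parity and produces $\frst(n-2, k)$. The main point to verify is that each stripping operation is invertible, which is routine since the weakly increasing order pins down where the removed copies are to be reinserted, and using Lemma~\ref{lemma_complement} to unify the $k$-even and $k$-odd cases of the complement description at the outset is what makes these case splits clean.
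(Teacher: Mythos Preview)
Your proposal is correct and follows essentially the same argument as the paper: both invoke Lemma~\ref{lemma_complement} and then stratify by the value of the largest part $\lambda_{n-k}$, removing one copy when that part is even and two when it is odd, with the same three-way split in the odd-$k$ case. The only cosmetic difference is that you make the invertibility of the stripping map explicit, whereas the paper leaves this implicit.
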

\begin{proof}
We use the characterization 
in Lemma~\ref{lemma_complement}.
When $k$ is even there are two cases.
If the last part is $k$, remove it to obtain 
a weak partition counted by $\frst(n-1,k)$.
If the last part is less than~$k$, then the weak partition is counted
by $\frst(n-1,k-1)$.

When $k$ is odd there are three cases.
If the last two parts are equal to $k$, then removing
these two parts yields a weak partition counted by
$\frst(n-2,k)$.
Note that we cannot have the last part equal to $k$
and the next to last part less than $k$ since $k$ is odd.
If the last part is equal to $k-1$, we can remove it
to obtain a weak partition counted by
$\frst(n-2,k-1)$. Finally, if the last part is less than or equal
to $k-2$, the weak partition is counted by
$\frst(n-2,k-2)$.
\end{proof}

\begin{remark}
{\rm
For $k$ odd we have the shorter recursion
$\frst(n,k) = \frst(n-1,k-1) + \frst(n-2,k)$.
However, we use the longer recursion in
the proof of Theorem~\ref{theorem_compact}.
}
\end{remark}

\begin{lemma}
The inequality
$\frst(n,k) \leq \frst(n+1,k+1)$
holds.
\label{lemma_frst_inequality}
\end{lemma}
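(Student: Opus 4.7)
The plan is to deduce the inequality as an immediate consequence of Lemma~\ref{lemma_complement}. The key observation is that once both quantities are reinterpreted as counts of weak partitions with the odd-parts-have-even-multiplicity condition, the two parameters $(n,k)$ and $(n+1,k+1)$ produce partition sets with the \emph{same} number of parts.

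First, I would invoke Lemma~\ref{lemma_complement} to rewrite both sides: $\frst(n,k)$ counts weak partitions into $n-k$ parts, each at most $k$, such that every odd part occurs with even multiplicity; similarly $\frst(n+1,k+1)$ counts weak partitions into $(n+1)-(k+1)=n-k$ parts, each at most $k+1$, with the same odd-parts-even-multiplicity condition. Next, I would note the trivial but crucial set-theoretic containment: any weak partition realizing the $\frst(n,k)$ description automatically satisfies the $\frst(n+1,k+1)$ description, since its parts lie in $\{0,1,\ldots,k\}\subseteq\{0,1,\ldots,k+1\}$ and neither the number of parts nor the parity condition on multiplicities changes. Then the identity map is the desired injection from the first set into the second, establishing $\frst(n,k) \leq \frst(n+1,k+1)$.

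I expect no real obstacle. The only point meriting a word of comment is that the two parameter choices yield the same part-count $n-k$, so the ``extra room'' one expects from increasing $n$ by one is exactly absorbed by simultaneously increasing $k$ by one; the net effect is only to loosen the upper bound on a single part from $k$ to $k+1$. Edge cases ($k>n$, or $n-k=0$) are handled by the boundary conventions of Lemma~\ref{lemma_complement}, since in each degenerate case both sides equal $0$ or both equal $1$.
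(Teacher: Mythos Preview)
Your proposal is correct and is essentially identical to the paper's own proof: both invoke Lemma~\ref{lemma_complement} so that $\frst(n,k)$ and $\frst(n+1,k+1)$ count weak partitions with the same number of parts $n-k$ and the same odd-multiplicity condition, differing only in the part bound $k$ versus $k+1$, whence the inclusion of the smaller rectangle into the larger one gives the inequality.
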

\begin{proof}
The weak partitions which lie inside the rectangle $(n-k) \times k$
and satisfy  the conditions of Lemma~\ref{lemma_complement}
are included among the weak partitions which lie inside the
larger 
rectangle $(n-k) \times (k+1)$ and satisfy the same conditions.
\end{proof}

\begin{theorem}
For all $0 \leq k \leq n$ the inequality
$|\Omega_{n,k}''| = \er(n,k) \leq \frst(n,k) = |\Omega_{n,k}'|$
holds.
\label{theorem_compact}
\end{theorem}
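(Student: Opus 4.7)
The plan is to prove the inequality $\er(n,k) \leq \frst(n,k)$ by induction on $n$, leveraging the two recursions just established together with the monotonicity result in Lemma~\ref{lemma_frst_inequality}. The base cases $n \in \{0,1\}$ are immediate since both sequences are defined to equal $1$ on $(0,0)$, $(1,0)$, $(1,1)$ and vanish outside $0 \leq k \leq n$.

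For the inductive step, assume $\er(m,j) \leq \frst(m,j)$ for all $m < n$ and all $j$. The case split will mirror the split in the $\frst$-recursion. When $k$ is odd, the recursions for $\er$ and $\frst$ coincide, so
$$
\er(n,k) = \er(n-2,k-2) + \er(n-2,k-1) + \er(n-2,k)
\leq \frst(n-2,k-2) + \frst(n-2,k-1) + \frst(n-2,k) = \frst(n,k)
$$
by a termwise application of the induction hypothesis.

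The even-$k$ case is the main obstacle, because the $\frst$-recursion has only two summands whereas the $\er$-recursion has three; a direct termwise comparison is impossible. The idea is to expand the second term of the $\frst$-recursion using the even-$k$ rule applied at $(n-1,k)$, namely
$$
\frst(n-1,k) = \frst(n-2,k-1) + \frst(n-2,k),
$$
so that
$$
\frst(n,k) = \frst(n-1,k-1) + \frst(n-2,k-1) + \frst(n-2,k).
$$
The last two summands dominate $\er(n-2,k-1)$ and $\er(n-2,k)$ by induction, and the remaining bound $\er(n-2,k-2) \leq \frst(n-1,k-1)$ is obtained by combining the induction hypothesis $\er(n-2,k-2) \leq \frst(n-2,k-2)$ with the monotonicity inequality $\frst(n-2,k-2) \leq \frst(n-1,k-1)$ from Lemma~\ref{lemma_frst_inequality}. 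Assembling these three inequalities gives $\er(n,k) \leq \frst(n,k)$.

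The remaining minor points are purely administrative: verifying that edge values of $k$ (such as $k = 0$ or $k = n$) do not cause the recursions to reference out-of-range indices in a problematic way, and checking that the expansion step $\frst(n-1,k) = \frst(n-2,k-1) + \frst(n-2,k)$ is legitimate (we have $n - 1 \geq 2$ whenever the even-$k$ step is genuinely needed, i.e.\ for $n \geq 3$; small cases can be dispatched by direct inspection). Apart from this bookkeeping, the whole proof is a short induction whose only nontrivial ingredient is the clever use of Lemma~\ref{lemma_frst_inequality} to absorb the mismatch between the two- and three-term recursions.
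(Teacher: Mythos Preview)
Your proposal is correct and follows essentially the same approach as the paper: induction on $n$, a termwise comparison in the odd-$k$ case via the common three-term recursion, and in the even-$k$ case the expansion $\frst(n-1,k)=\frst(n-2,k-1)+\frst(n-2,k)$ together with Lemma~\ref{lemma_frst_inequality} to handle the leftover term $\er(n-2,k-2)\leq\frst(n-2,k-2)\leq\frst(n-1,k-1)$. The only cosmetic differences are that the paper arranges the even-$k$ chain of inequalities from the $\er$ side upward rather than from the $\frst$ side downward, and it takes $n\leq 3$ (plus $k\in\{0,1,n-1,n\}$) as its base, which absorbs the small-case bookkeeping you defer to direct inspection.
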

\begin{proof}
We proceed by induction on $n$. The induction base is $n \leq 3$.
Furthermore, the inequality holds when $k$ is $0$, $1$, $n-1$ and $n$.
When $k$ is odd we have that
\begin{align*}
\er(n,k) & = \er(n-2,k-2) + \er(n-2,k-1) + \er(n-2,k) \\
         & \leq \frst(n-2,k-2) + \frst(n-2,k-1) + \frst(n-2,k) \\
         & = \frst(n,k) .
\end{align*}
Similarly, when $k$ is even we have
\begin{align*}
\er(n,k) & = \er(n-2,k-2) + \er(n-2,k-1) + \er(n-2,k) \\
         & \leq \frst(n-2,k-2) + \frst(n-2,k-1) + \frst(n-2,k) \\
         & \leq \frst(n-1,k-1) + \frst(n-2,k-1) + \frst(n-2,k) \\
         & = \frst(n-1,k-1) + \frst(n-1,k) \\
         & = \frst(n,k) ,
\end{align*}
where the second inequality follows from
Lemma~\ref{lemma_frst_inequality}.
These two cases complete the induction hypothesis.
\end{proof}

See Table~\ref{table_frst_er} to compare the values
of $\frst(n,k)$ and $\er(n,k)$ for $n \leq 10$.

\begin{table}
$$
\begin{array}{r r r r r r r r r r r c r r r r r r r r r r r}
1 &&&&&&&&&&& \hspace*{10 mm} &
1 \\
1 & 1 &&&&&&&&&&&
1 & 1 \\
1 & 1 & 1 &&&&&&&&&&
1 & 1 & 1 \\
1 & 2 & 2 & 1 &&&&&&&&&
1 & 2 & 2 & 1 \\
1 & 2 & 4 & 2 & 1 &&&&&&&&
1 & 2 & 3 & 2 & 1 \\
1 & 3 & 6 & 5 & 3 & 1 &&&&&&&
1 & 3 & 5 & 5 & 3 & 1 \\
1 & 3 & 9 & 8 & 8 & 3 & 1 &&&&&&
1 & 3 & 6 & 7 & 6 & 3 & 1 \\
1 & 4 & 12 & 14 & 16 & 9 & 4 & 1 &&&&&
1 & 4 & 9 & 13 & 13 & 9 & 4 & 1 \\
1 & 4 & 16 & 20 & 30 & 19 & 13 & 4 & 1 &&&&
1 & 4 & 10 & 16 & 19 & 16 & 10 & 4 & 1 \\
1 & 5 & 20 & 30 & 50 & 39 & 32 & 14 & 5 & 1 &&&
1 & 5 & 14 & 26 & 35 & 35 & 26 & 14 & 5 & 1 \\
1 & 5 & 25 & 40 & 80 & 69 & 71 & 36 & 19 & 5 & 1 &&
1 & 5 & 15 & 30 & 45 & 51 & 45 & 30 & 15 & 5 & 1 \end{array}
$$
\caption{The $\frst$- and $\er$-triangles for $n \leq 10$.}
\label{table_frst_er}
\end{table}

\section{Concluding remarks}

Is it possible to find 
a $q$-$(1+q)$-analogue of the Gaussian coefficient
which has the smallest possible index set?
We believe that our analogue is the smallest,
but cannot offer a proof of a minimality.
Perhaps a more tractable question is to prove
that the Boolean algebra
decomposition of
$\Omega_{n,k}$ is minimal.

We can extend these ideas
involving a Boolean algebra decomposition
to any distributive lattice. Let~$P$ be a finite poset
and
let $A$ be an antichain of $P$ such that
there is no cover relation in $A$, that is,
there is no pair of elements $u, v \in A$ such that $u \prec v$.
We obtain a Boolean algebra decomposition of the
Birkhoff transform $J(P)$ by defining
$$  J^{\doubleprime}(P)
    =
      \{I \in J(P) \:\: : \:\:
          \text{ the ideal $I$ has no maximal elements
                       in the antichain $A$}\} .  $$
The two maps $\phi$ and $\psi$ are now defined as
\begin{align*}
\phi(I)
& =
I - \{a \in A \: : \: \text{ the element $a$ is maximal in $I$}\} , \\
\psi(I)
& =
I \cup \{a \in A \: : \: I \cup \{a\} \in J(P)\} .
\end{align*}
We have the following decomposition theorem.
\begin{theorem}
For $P$ any
finite poset the distributive lattice $J(P)$ has the Boolean algebra
decomposition
$$   J(P)
  =
\bigcup_{I \in J^{\doubleprime}(P)}
      [I, \psi(I)]   .  $$
\end{theorem}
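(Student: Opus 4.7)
The plan is to imitate the argument of Theorem~\ref{theorem_decomposition}, verifying in turn that $\phi$ sends $J(P)$ into $J^{\doubleprime}(P)$, that $\psi(I)$ lies in $J(P)$ for each $I \in J^{\doubleprime}(P)$, and that the fiber $\phi^{-1}(I)$ coincides with the interval $[I, \psi(I)]$, which is itself a Boolean algebra.

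First I would verify that $\phi$ is well-defined. Removing a set of maximal elements from a lower order ideal preserves the ideal property, so $\phi(I) \in J(P)$. If some $a \in A$ were maximal in $\phi(I)$, then $a$ must have survived $\phi$ and so was not maximal in $I$; therefore $a$ is covered in $I$ by some element $a'$. The hypothesis that $A$ contains no cover relations forces $a' \notin A$, so $a'$ is not deleted and remains in $\phi(I)$, contradicting the maximality of~$a$. This is the single place where the structural hypothesis on $A$ enters. The set $\psi(I)$ is manifestly a union of lower order ideals, hence again a lower order ideal.

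Next I would establish $\phi^{-1}(I) = [I, \psi(I)]$ by two containments. For the forward direction, if $\phi(J) = I$ then each $a \in J \setminus I$ is maximal in $J$ and lies in $A$; its strict lower set in $J$ contains no other deleted element (those are all maximal in $J$ by construction), so this lower set lies in $I$, which gives $I \cup \{a\} \in J(P)$ and hence $a \in \psi(I)$. For the reverse direction, write $J = I \cup S$ with $S \subseteq \psi(I) \setminus I \subseteq A$. Every $a \in S$ is maximal in $J$: an element $b > a$ lying in $I$ would force $a \in I$ by the ideal property of $I$, while an element $b > a$ lying in $S$ would satisfy $I \cup \{b\} \in J(P)$ by the definition of $\psi(I)$, placing $a$ in the strict lower set of $b$ inside $I$; either outcome contradicts $a \notin I$. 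Furthermore, no element of $A \cap I$ is maximal in $J$, since by $I \in J^{\doubleprime}(P)$ it is already non-maximal in $I$. Thus the $A$-maximal elements of $J$ are exactly $S$, so $\phi(J) = I$.

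Finally, the elements of $[I, \psi(I)]$ are precisely the sets $I \cup S$ as $S$ ranges over subsets of $\psi(I) \setminus I$, each of which is a lower order ideal because unions of lower order ideals remain lower order ideals. This identifies the interval with the Boolean lattice $2^{\psi(I) \setminus I}$. The main obstacle is isolating the role of the no-cover hypothesis on $A$; everything else is bookkeeping about maximality in ideals, but that hypothesis is precisely what prevents $\phi$ from accidentally promoting an $A$-element to maximal status after its covers are stripped away, and it is what makes the fiber structure genuinely Boolean.
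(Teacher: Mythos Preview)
The paper states this theorem in the concluding remarks without proof; it is offered as the evident generalization of Theorem~\ref{theorem_decomposition}, whose two-line argument records only that $\phi(w) \leq w \leq \psi(w)$ and that each fiber $\phi^{-1}(v)$ equals the Boolean interval $[v,\psi(v)]$. Your argument is correct and fleshes out exactly this outline in the general setting, so it is the intended approach and is more detailed than anything the paper itself supplies. One small remark: since the paper already assumes $A$ is an antichain, the added ``no cover relation'' clause is redundant, and your appeal to it when arguing $a' \notin A$ could just as well cite incomparability in $A$; your proof in fact goes through under the weaker no-cover hypothesis alone.
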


Yet again, how can we select the antichain $A$
such that the above decomposition $A$
has the fewest possible terms?
Furthermore, would this give the smallest
Boolean algebra decomposition?

Another way to extend the ideas of
Theorem~\ref{theorem_decomposition}
is as follows.
Define $\Omega^{r}_{n,k}$
to be the set of all words $v \in \Omega_{n,k}$
satisfying the inequalities
$$
v_{1} \leq v_{2} \leq \cdots \leq v_{r}, \:\:
v_{r+1} \leq v_{r+2} \leq \cdots \leq v_{2r}, \:\:
\ldots, \:\:
v_{r \cdot \lfloor n/r \rfloor - r+1}
\leq v_{r \cdot \lfloor n/r \rfloor - r+2}
\leq \cdots 
\leq v_{r \cdot \lfloor n/r \rfloor} .
$$
For $1 \leq i \leq \lfloor r/2\rfloor$
define the statistics
$b_{i}(v)$
for $v \in \Omega^{r}_{n,k}$
to be
$$
b_{i}(v)
  =
|\{j \in [\lfloor n/r \rfloor]
\: : \:
v_{r j - r+1} + v_{r  j - r+2} + \cdots + v_{r j} \in \{i,r-i\}\}| .
$$
\begin{theorem}
The distributive lattice $\Omega_{n,k}$
has the decomposition
$$ \Omega_{n,k}
   =
  \bigcup_{v \in \Omega^{r}_{n,k}}
  \Omega_{r,1}^{b_{1}(v)}
\times
  \Omega_{r,2}^{b_{2}(v)}
\timesdots
  \Omega_{r,\lfloor r/2 \rfloor}^{b_{\lfloor r/2 \rfloor}(v)}  .  $$
\label{theorem_r}
\end{theorem}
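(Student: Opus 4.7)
The plan is to adapt the proof of Theorem~\ref{theorem_decomposition} by replacing ``sort in pairs'' with ``sort in blocks of size $r$''. Concretely, I would define a map $\phi_r \colon \Omega_{n,k} \to \Omega^r_{n,k}$ that, for each $j = 1, \ldots, \lfloor n/r \rfloor$, sorts the entries in positions $rj - r + 1, \ldots, rj$ into weakly increasing order, and leaves the trailing positions $r \lfloor n/r \rfloor + 1, \ldots, n$ fixed. This $\phi_r$ is well defined, surjective onto $\Omega^r_{n,k}$, and satisfies $\phi_r(w) \leq w$ in $\Omega_{n,k}$. The fiber of $\phi_r$ above $v \in \Omega^r_{n,k}$ consists precisely of the words obtained from $v$ by independently permuting the entries within each length-$r$ block, with the tail fixed.

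The heart of the argument is to show that the subposet of $\Omega_{n,k}$ induced on $\phi_r^{-1}(v)$ is isomorphic to the Cartesian product $\prod_{j=1}^{\lfloor n/r \rfloor} \Omega_{r,\, s_j(v)}$, where $s_j(v) = v_{rj - r + 1} + \cdots + v_{rj}$ is the sum of the $j$-th block of $v$. The crucial observation is that every cover $u \circ 01 \circ w \prec u \circ 10 \circ w$ between two elements of the fiber must swap a $01$/$10$ adjacent pair lying entirely inside a single block, because a swap straddling a block boundary would alter the sums of two consecutive blocks and push the result out of the fiber. Consequently, the cover structure on the fiber decomposes as the product of the cover structures on the individual blocks, each of which is by definition a copy of $\Omega_{r,\, s_j(v)}$.

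Finally, I would bring this block-indexed product into the form stated in the theorem by using the order-isomorphism $\Omega_{r, s} \cong \Omega_{r, r-s}$, realized (for example) by the involution that reverses a word and simultaneously interchanges $0$'s and $1$'s; one checks directly that this involution preserves the cover relation $u \circ 01 \circ w \prec u \circ 10 \circ w$. Blocks with $s_j(v) \in \{0, r\}$ contribute one-element factors and may be discarded. Grouping the remaining blocks by the unordered pair $\{s_j(v),\, r - s_j(v)\}$ then collects exactly $b_i(v)$ factors isomorphic to $\Omega_{r, i}$ for each $1 \leq i \leq \lfloor r/2 \rfloor$, yielding the claimed decomposition.

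The main obstacle is the middle step: making rigorous the claim that the subposet on a fiber truly decomposes as a Cartesian product of smaller $\Omega_{r,\cdot}$'s, i.e., verifying that no cover relation within a fiber crosses a block boundary. Once this is in hand, the remaining reduction via the symmetry $\Omega_{r, s} \cong \Omega_{r, r-s}$ is purely bookkeeping.
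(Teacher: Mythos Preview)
The paper states Theorem~\ref{theorem_r} in the concluding remarks without proof, so there is no argument to compare against directly. Your proposal is precisely the intended generalization of the proof of Theorem~\ref{theorem_decomposition}: sort each length-$r$ block via a map $\phi_r$, identify the fiber over $v\in\Omega^{r}_{n,k}$ with $\prod_{j}\Omega_{r,s_{j}(v)}$, and then regroup factors using the isomorphism $\Omega_{r,s}\cong\Omega_{r,r-s}$.

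The step you flag as the ``main obstacle'' is in fact routine once phrased in terms of partial sums rather than cover relations. Two words in the same fiber have identical block sums, hence identical partial sums $\sum_{i\le rj}w_{i}$ at every block boundary. Since $w\le w'$ in $\Omega_{n,k}$ is equivalent to $\sum_{i\le m}w_{i}\le\sum_{i\le m}w'_{i}$ for all $m$, and these inequalities already coincide at the boundaries, the comparison decouples block by block; this is exactly the product order on $\prod_{j}\Omega_{r,s_{j}(v)}$. Equivalently, the fiber is the interval $[v,\psi_{r}(v)]$, where $\psi_{r}$ reverse-sorts each block, and that interval factors as claimed. Your cover-relation argument also works, but the partial-sum formulation dispatches it in one line.
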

\begin{corollary}
The $q$-binomial is given
by
$$ \Gaussian{n}{k}
=
  \sum_{v \in \Omega^{r}_{n,k}}
        q^{\inv(v)}
\cdot
  \Gaussian{r}{1}^{b_{1}(v)}
\cdot
  \Gaussian{r}{2}^{b_{2}(v)}
\cdots
  \Gaussian{r}{\lfloor r/2 \rfloor}^{b_{\lfloor r/2 \rfloor}(v)}  .  $$
\label{corollary_r}
\end{corollary}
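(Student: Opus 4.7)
The plan is to apply the decomposition of Theorem~\ref{theorem_r} to MacMahon's identity $\Gaussian{n}{k} = \sum_{w \in \Omega_{n,k}} q^{\inv(w)}$, so that
$$
\Gaussian{n}{k}
=
\sum_{v \in \Omega^r_{n,k}}
\sum_{w \in \text{block}(v)}
q^{\inv(w)},
$$
where $\text{block}(v)$ denotes the subposet
$\Omega_{r,1}^{b_1(v)} \timesdots \Omega_{r,\lfloor r/2 \rfloor}^{b_{\lfloor r/2 \rfloor}(v)}$
attached to $v$. It then suffices to show that the inner sum equals $q^{\inv(v)} \cdot \prod_{i=1}^{\lfloor r/2 \rfloor} \Gaussian{r}{i}^{b_i(v)}$.

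The main step is to decompose $\inv$ additively along windows. For a word $w$ in $\text{block}(v)$, the entries in the $j$-th window of length $r$, namely positions $[rj-r+1, rj]$, form the same multiset as those of $v$; only the arrangement within each window changes across the block. Consequently
$$
\inv(w) = \inv_{\text{between}}(w) + \sum_{j=1}^{\lfloor n/r \rfloor} \inv(u_j),
$$
where $u_j$ denotes the restriction of $w$ to the $j$-th window. The between-window term depends only on the windowwise multisets, hence is constant on $\text{block}(v)$; since $v$ is sorted within every window, its within-window inversions vanish, giving $\inv_{\text{between}}(w) = \inv(v)$.

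With this additivity in hand, summing over the block is routine. Each $u_j$ ranges independently through $\Omega_{r,i_j}$, where $i_j$ counts the $1$s in the $j$-th window of $v$, and applying MacMahon's identity in each factor yields $q^{\inv(v)} \cdot \prod_j \Gaussian{r}{i_j}$. Windows with $i_j \in \{0,r\}$ contribute a trivial factor of $1$, and for the remaining windows the palindromic symmetry $\Gaussian{r}{i} = \Gaussian{r}{r-i}$ lets the factors combine according to the defining count of $b_i(v)$. The main obstacle is thus reconciling the product structure of Theorem~\ref{theorem_r} --- indexed by $1 \leq i \leq \lfloor r/2 \rfloor$ --- with the window-by-window description whose indices $i_j$ range over $\{0,1,\ldots,r\}$; the palindromic symmetry of the Gaussian coefficient together with the collapse of the $i_j \in \{0,r\}$ cases handles this, and the same argument treats the final incomplete window when $r \nmid n$.
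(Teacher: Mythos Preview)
Your argument is correct and is exactly what the paper intends: Corollary~\ref{corollary_r} is stated without proof immediately after Theorem~\ref{theorem_r}, in direct analogy with how Corollary~\ref{corollary_q_1_q} follows from Theorem~\ref{theorem_decomposition}, and your computation of the inner sum over each block---splitting $\inv$ into the constant between-window part $\inv(v)$ plus independent within-window parts summing via MacMahon to $\prod_j \Gaussian{r}{i_j}$---is the expected derivation. One minor clarification on your last sentence: when $r \nmid n$ the tail of length $n - r\lfloor n/r\rfloor$ carries no sorting constraint in the definition of $\Omega^{r}_{n,k}$ and is simply held fixed across the block, so its internal inversions are already absorbed into $\inv(v)$ rather than producing any Gaussian factor.
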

The least complicated case is when $r=3$,
where only one term appears in the above poset product.
This term is $\Omega_{3,1}$ which is the three element chain $C_{3}$.
The associated Gaussian coefficient
is $1+q+q^{2}$. 
Thus Corollary~\ref{corollary_r}
could be called a {\em $q$-$(1+q+q^{2})$-analogue}
in the case of $r = 3$.
As an example, we have
$$ \Gaussian{6}{3}
  =  1 + q \cdot (1+q+q^{2})^{2} + q^{4} \cdot (1+q+q^{2})^{2} + q^{9} . $$
On a poset level this is a decomposition
of $J(C_{3} \times C_{3})$ into two
one-element posets of rank~$0$ and rank~$9$,
and two copies of $C_{3} \times C_{3}$,
where one has its minimal element of rank~$1$ and the other
of rank~$4$.

\section*{Acknowledgements}

The authors thank the referee for helpful comments.
This work was partially supported by a grant from
the Simons Foundation (\#206001 to Margaret~Readdy).

\newcommand{\journal}[6]{{\sc #1,} #2, {\it #3} {\bf #4} (#5), #6.}
\newcommand{\preprint}[3]{{\sc #1,} #2, preprint #3.}
\newcommand{\book}[4]{{\sc #1,} #2, #3, #4.}
\newcommand{\collection}[6]{{\sc #1,}  #2, #3, in {\it #4}, #5, #6.}
\newcommand{\JCTA}{J.\ Combin.\ Theory Ser.\ A}
\newcommand{\arxiv}[3]{{\sc #1,} #2, {\tt #3}.}


\begin{thebibliography}{19}

\bibitem{Cai_Readdy}
\arxiv{Y.\ Cai and M.\ Readdy}
      {$q$-Stirling numbers:  A new view}
      {arXiv:1506.03249 [math.CO], 30 pp}

\bibitem{Fu_Reiner_Stanton_Thiem}
\journal{S.\ Fu, V.\ Reiner, D.\ Stanton and N.\ Thiem}
            {The negative $q$-binomial}
            {Electron J.\ Combin.}
            {19}{2012}{36 pages} 

\bibitem{MacMahon_1}
\collection{P.\ A.\ MacMahon}
            {The indices of permutations and the derivation therefrom of
             functions of a single variable associated with the
             permutations of any assemblage of objects,
             {\em Amer.\ J.\ Math.}}
             {{\bf 35} (1913), 281--322}
     {Collected papers. Vol.\ I.
      Combinatorics. Mathematicians of Our Time.
      Edited and with a preface by George E.\ Andrews.
      With an introduction by Gian-Carlo Rota}
     {MIT Press, Cambridge, Mass.-London}{1978, pp.\ 508--549}

\bibitem{MacMahon_collected}
\collection{P.\ A.\ MacMahon}
           {Two applications of general theorems in combinatory analysis,
           {\em Proc.\ London Math.\ Soc.}} {{\bf 15} (1916), 314--321}
     {Collected papers. Vol.\ I.
      Combinatorics. Mathematicians of Our Time.
      Edited and with a preface by George E.\ Andrews.
      With an introduction by Gian-Carlo Rota}
     {MIT Press, Cambridge, Mass.-London}{1978, pp.\ 556--563}

\bibitem{MacMahon_book}
\book{P.\ A.\ MacMahon}
         {Combinatory Analysis}
         {Chelsea Publishing Co., New York}
         {1960}

\bibitem{EC1}
\book{R.\ P.\ Stanley}
         {Enumerative Combinatorics, Vol 1, second edition}
         {Cambridge University Press, Cambridge}
         {2012}

\end{thebibliography}
\end{document}